\newtheorem{prop}{Proposition}
\newtheorem{lemma}[prop]{Lemma}
\newtheorem{theorem}[prop]{Theorem}
\theoremstyle{definition}
\newtheorem{example}[prop]{Example}
\newcommand{\mylabel}[2]{#2\def\@currentlabel{#2}\label{#1}}
\begin{document}
\tikzset{mystyle/.style={matrix of nodes,
        nodes in empty cells,
        row 1/.style={nodes={draw=none}},
        row sep=-\pgflinewidth,
        column sep=-\pgflinewidth,
        nodes={draw,minimum width=1cm,minimum height=1cmanchor=center}}}
\tikzset{mystyleb/.style={matrix of nodes,
        nodes in empty cells,
        row sep=-\pgflinewidth,
        column sep=-\pgflinewidth,
        nodes={draw,minimum width=1cm,minimum height=1cmanchor=center}}}

\title{Further results on staircase graph words}

\author[SELA FRIED]{Sela Fried$^*$}
\thanks{$^*$Department of Computer Science, Israel Academic College,
52275 Ramat Gan, Israel.
\\
\href{mailto:friedsela@gmail.com}{\tt friedsela@gmail.com}}

\begin{abstract}
 \noindent
Staircase words are words in which consecutive letters do not differ by more than $1$. We generalize this by extending the restriction to letters lying further apart from each other and obtain the corresponding generating functions, which we express in terms of the Chebyshev polynomials of the second kind.
\bigskip
 
\noindent \textbf{Keywords:} Chebyshev polynomial, generating function, path graph, staircase word.
\smallskip

\noindent
\textbf{Math.~Subj.~Class.:} 68R05, 05A05, 05A15.
\end{abstract}

\maketitle

\baselineskip=0.20in

\section{Introduction}
Let $[k]^n$ be the set of all words of length $n$ over the alphabet $[k] = \{1, 2, \ldots, k\}$. Knopfmacher et al.\ \cite{K} introduced a class of words which they called staircase words. These are words $w_1\cdots w_n\in[k]^n$ which satisfy $|w_{i+1}-w_i|\leq 1$, for every $1\leq i\leq n-1$. Inspired by this, we defined staircase graph words in \cite{FM}, which are graph labelings with values in $[k]$, such that the labels of adjacent vertices differ by at most $1$. Under this definition, staircase words of length $n$ are staircase graph words of the path graph $P_n$. In \cite{FM} we studied the number of staircase graph words of the grid graph, the rectangle-triangular graph and the king's graph, all of size $2\times n$. This work is concerned with the path graph, but with an extended edge set, namely, for a positive integer $L$ we define $P_{n,L}$ to be the path graph of order $n$, in which vertices are adjacent if their distance is not greater than $L$ (see Figure \ref{fig;30}). In particular, $P_{n,1}$ coincides with $P_n$. If one wishes to avoid the notion of staircase graph words and adhere to the standard terminology of restricted words, staircase graph words of $P_{n,L}$ are words $w_1\cdots w_n\in[k]^n$ which satisfy $\max\{w_{i},\ldots,w_{i+L}\}-\min\{w_{i},\ldots,w_{i+L}\}\leq 1$, for every $1\leq i\leq n-L$. 

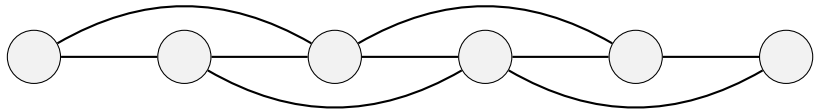
\begin{figure}[H]
\centering
\scalebox{1}{
\begin{tikzpicture}[shorten >=0pt,node distance=3cm,auto]
\node[draw,circle,inner sep=0.25cm, fill=black!5] at (0, 0)   (1) {};
\node[draw,circle,inner sep=0.25cm, fill=black!5] at (2, 0)   (2) {};
\node[draw,circle,inner sep=0.25cm, fill=black!5] at (4, 0)   (3) {};
\node[draw,circle,inner sep=0.25cm, fill=black!5] at (6, 0)   (4) {};
\node[draw,circle,inner sep=0.25cm, fill=black!5] at (8, 0)   (5) {};
\node[draw,circle,inner sep=0.25cm, fill=black!5] at (10, 0)   (6) {};
\path (1) edge[thick,-] node[above] {} (2);
\path (2) edge[thick,-] node[above] {} (3);
\path (3) edge[thick,-] node[above] {} (4);
\path (4) edge[thick,-] node[above] {} (5);
\path (5) edge[thick,-] node[above] {} (6);
\path (1) edge[thick,-, bend left] node[above] {} (3);
\path (3) edge[thick,-, bend left] node[above] {} (5);
\path (2) edge[thick,-, bend right] node[above] {} (4);
\path (4) edge[thick,-, bend right] node[above] {} (6);
\end{tikzpicture} }
\caption{The graph $P_{6,2}$}\label{fig;30}
\end{figure} \noindent

\section{Preliminaries}
The Chebyshev polynomials are two sequences of polynomials related to the cosine and sine functions, which find extensive use in approximation theory. They also emerge naturally in combinatorics, and, in particular, while studying staircase words (e.g., \cite{F, K}). The Chebyshev polynomials of the first kind, denoted by
$T_n(x)$, are defined by 
$T_{n}(\cos \theta )=\cos(n\theta)$, and satisfy the recursion 
\begin{align}
    T_0(x)&=1\nonumber\\
    T_1(x)&=x\nonumber\\
    T_{n+1}(x)&=2xT_n(x)-T_{n-1}(x).\nonumber
\end{align}
Similarly, the Chebyshev polynomials of the second kind, denoted by $U_n(x)$, are defined by 
$U_{n}(\cos\theta )\sin\theta=\sin((n+1)\theta)$, and satisfy the recursion 
\begin{align}
    U_0(x)&=1\nonumber\\
    U_1(x)&=2x\nonumber\\
    U_{n+1}(x)&=2xU_n(x)-U_{n-1}(x).\label{u1}
\end{align}
We shall make use of the following well-known Chebyshev polynomials identities (e.g., \cite[(2.4)]{K} and \cite{MH}):
\begin{align}
U_m(x)U_n(x)&=\frac{U_{m-n}(x)-xU_{m-n-1}(x)-U_{m+n+2}(x)+xU_{m+n+1}(x)}{2(1-x^{2})},\label{i3}\\
\left(x+\sqrt{x^{2}-1}\right)^{n}&=\frac{U_{n}(x)-U_{n-2}(x)}{2}+U_{n-1}(x)\sqrt{x^{2}-1},\label{i1} \\
U_{n}(x)^{2}&=1+U_{n-1}(x)U_{n+1}(x)\label{i5}.
\end{align}

\section{Main results}

We apply the kernel method, which is a widely used tool in combinatorics (e.g., \cite{MS1, P}). It involves the introduction of an additional variable. 

Let $k\geq 2$ and $L\geq 1$ be two integers and let $f(x)=f_{k,L}(x)$ be the generating function of the number of staircase graph words of $P_{n,L}$ over $[k]$. For $a_1,\ldots,a_L\in[k]$, we define  $f_{a_1,\ldots,a_L}(x)=f_{k,L,a_1,\ldots,a_L}(x)$ to be the generating function of the number of staircase graph words of $P_{n,L}$ over $[k]$, whose last $L$ letters are $a_1,\ldots,a_L$. We set $f_{a_1,\ldots,a_L}(x)=0$ if any of the numbers $a_1,\ldots,a_L$ does not belong to $[k]$. Furthermore, by definition of the staircase property, $f_{a_1,\ldots,a_L}(x)=0$ if 
$\max\{a_1,\ldots,a_L\}-\min\{a_1,\ldots,a_L\}>1$.

\begin{lemma}\label{ll1}
Let $I=\{0,1\}^L\setminus\{(0,\ldots,0),(1,\ldots,1)\}, F_{0,\ldots,0}(x,t)=\sum_{a=1}^{k}f_{a,\ldots,a}(x)t^{a-1}$, and, for each $(o_1,\ldots,o_L)\in I$, let $F_{o_1,\ldots,o_L}(x,t)=\sum_{a=1}^{k-1}f_{a+o_1,\ldots,a+o_L}(x)t^{a-1}$. Then
\begin{enumerate}
\item $f(x) = \sum_{i=0}^{L-1}(k+(k-1) (2^{i}-2))x^{i}+F_{0,\ldots,0}(x,1) +\sum_{o_1,\ldots,o_L}F_{o_1,\ldots,o_L}(x,1)$.
\item The $2^L-1$ functions $F_{0,\ldots,0}(x,t), F_{1,0,\ldots,0}(x,t),\ldots, F_{0,1,\ldots,1}(x,t)$ satisfy the following system of linear equations:
\begin{align}
F_{0,\ldots,0}(x,t)&=\frac{x^L(1-t^{k})}{1-t}+xF_{0,\ldots,0}(x,t) + xF_{1,0,\ldots,0}(x,t) + txF_{0,1,\ldots,1}(x,t),\nonumber\\
F_{1,\ldots,1,0}(x,t)&=\frac{x^L(1-t^{k})}{1-t} + \frac{x}{t}(F_{0,\ldots,0}(x,t)-f_{1,\ldots,1}(x)) + xF_{0,1,\ldots,1}(x,t),\nonumber\\
F_{0,\ldots,0,1}(x,t)&=\frac{x^L(1-t^{k})}{1-t} + x(F_{0,\ldots,0}(x,t)-t^{k-1}f_{k,\ldots,k}(x)),\nonumber
\end{align} 
and, for every $(o_1,\ldots,o_L)\in I\setminus\{(1,\ldots,1,0),(0,\ldots,0,1)\}$, 
\[F_{o_1,\ldots,o_L}(x,t)=\frac{x^L(1-t^{k})}{1-t} + xF_{0,o_1,\ldots,o_{L-1}}(x,t)+xF_{1,o_1,\ldots,o_{L-1}}(x,t).\]
\end{enumerate}
\end{lemma}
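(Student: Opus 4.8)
The plan is to prove both parts from a single combinatorial decomposition of the valid words according to their length and their last $L$ letters, using throughout the description of a staircase graph word of $P_{n,L}$ as a word in which every $L+1$ consecutive letters span at most two consecutive values $\{a,a+1\}$ (and, since $P_{i,L}$ is a complete graph for $i\le L$, the whole word spans at most $\{a,a+1\}$ when $n\le L$).

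For part (1) I would split the count by length. A valid word of length $i$ with $0\le i\le L-1$ lies on the complete graph $P_{i,L}$, so it is exactly a word whose letters all fall in some pair $\{a,a+1\}$; by a direct count this gives $k$ constant words together with $(k-1)(2^i-2)$ strictly two-valued words (for each of the $k-1$ adjacent pairs there are $2^i-2$ words using both values), i.e. the coefficient $k+(k-1)(2^i-2)$ of $x^i$. Every other valid word has length at least $L$, and I would classify it by its last $L$ letters $(a_1,\dots,a_L)$. Writing $a=\min\{a_1,\dots,a_L\}$ and $a_j=a+o_j$ with $(o_1,\dots,o_L)\in\{0,1\}^L$, the base value $a$ is recorded by the exponent of $t$, which is precisely the role of the functions $F_{o_1,\dots,o_L}(x,t)$: the constant suffixes (offsets all $0$) are gathered in $F_{0,\dots,0}$ with $a$ ranging over all of $[k]$, while a non-constant offset pattern forces $a$ to range only over $1,\dots,k-1$. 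Evaluating at $t=1$ reassembles the contribution of all words of length at least $L$, and adding the short-word polynomial yields the stated expression for $f(x)$.

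For part (2) I would derive each equation by the remove-last-letter recursion. Fix an offset pattern and base $a$, so the suffix is $(a+o_1,\dots,a+o_L)$. The unique valid word of length exactly $L$ with this suffix contributes $x^L$, and summing $x^Lt^{a-1}$ over the admissible range of $a$ produces the rational seed term. For a word of length at least $L+1$ I delete $w_n$ and condition on the now-exposed letter $w_{n-L}$: the previous suffix becomes $(w_{n-L},a+o_1,\dots,a+o_{L-1})$, and validity of the final window restricts $w_{n-L}$ to the values keeping all $L+1$ letters inside a unit interval. Each admissible $w_{n-L}$ identifies the previous suffix with some $F_{\epsilon,o_1,\dots,o_{L-1}}$, $\epsilon\in\{0,1\}$, contributing a factor $x$ and a power of $t$ that records the change (if any) of base value. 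For a generic pattern, where the minimum is already attained among the first $L-1$ coordinates, the base does not move and one obtains the two clean terms $xF_{0,o_1,\dots,o_{L-1}}+xF_{1,o_1,\dots,o_{L-1}}$.

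The hard part, and the reason for the three distinguished equations, will be the bookkeeping of the base value when the minimum or maximum of the suffix is attained only at the letter being removed or exposed. I expect the two extreme patterns $(1,\dots,1,0)$ and $(0,\dots,0,1)$ to require the most care: for $(1,\dots,1,0)$ the choice $w_{n-L}=a+1$ turns the predecessor into the constant block at base $a+1$, which sits in $F_{0,\dots,0}$ shifted one step in $t$ (producing the factor $x/t$) and forces the deletion of the unreachable base-$1$ term $f_{1,\dots,1}$; symmetrically, for $(0,\dots,0,1)$ the constant predecessor stays at base $a$ but now $a$ runs only to $k-1$, so the base-$k$ term $t^{k-1}f_{k,\dots,k}$ must be removed. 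These are exactly the patterns whose offset shift $(o_1,\dots,o_L)\mapsto(\epsilon,o_1,\dots,o_{L-1})$ would land on the forbidden all-$0$ or all-$1$ index, which is why they cannot be absorbed into the generic recursion. Once the base shifts and the boundary corrections at $a=1$ and $a=k$ are settled, collecting the contributions over all offset patterns assembles the full linear system, and I would finish with a consistency check at $t=1$ against the formula of part (1).
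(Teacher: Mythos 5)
Your proposal is correct and is essentially the paper's own proof: the paper writes down exactly your delete-the-last-letter recursions \eqref{s500} and \eqref{s501}, multiplies them by $t^{a-1}$ and sums over the base $a$ (part (1) it simply declares clear, which your length/suffix classification fills in). One remark: carried out faithfully, your bookkeeping gives the seed $x^{L}(1-t^{k-1})/(1-t)$ for the non-constant patterns and an additional term $xF_{1,0,\ldots,0}(x,t)$ in the equation for $F_{0,\ldots,0,1}(x,t)$; this does not match the equations as printed in Lemma \ref{ll1}, but it does match the matrix of Lemma \ref{lemma10} and the choice of $q(x,t)$, $r(x,t)$ in the theorem's proof, so the discrepancy is a typo in the printed lemma rather than an error in your derivation.
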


\begin{proof}
The first assertion is clear. Now,
for each $a\in[k]$, we have 
\begin{equation}\label{s500}
f_{a,\ldots,a}(x)=x^L+x\sum_{o\in\{-1,0,1\}}f_{a+o,a,\ldots,a}(x).
\end{equation}
Similarly, for each $a\in[k-1]$ and $(o_1,\ldots,o_L)\in I$, we have 
\begin{equation}\label{s501}
f_{a+o_1,\ldots,a+o_L}(x)=x^L+x\sum_{o\in \{0,1\}}f_{a+o,a+o_1,\ldots,a+o_{L-1}}(x).
\end{equation}
To obtain the system, we multiply \eqref{s500} and \eqref{s501} by $t^{a-1}$ and sum \eqref{s500} over $a\in[k]$ and \eqref{s501} over $a\in[k-1]$, respectively.
\end{proof}

In the next lemma we list the necessary properties of the system derived above. We omit its proof, which is left as an exercise to the interested reader. If $v$ is a vector, we write $v^T$ for the transpose of $v$.

\begin{lemma}\label{lemma10}
Let $A(x,t) = I - B(x,t)$, where $B(x,t)=(b_{ij}(x,t))$ is the square matrix of size $2^L-1$ given by
\[
b_{ij}(x,t) = 
\begin{cases}
tx,& \textnormal{if $(i,j)=(1,2^L-1)$}; \\
x, & \textnormal{if $(i,j)=(2^{L-1},2^L-1)$}\\
&\textnormal{or $j<2^L-1$ and $i = \left\lceil \frac{j}{2}\right\rceil$ or $i = \left\lceil \frac{j}{2}\right\rceil+2^{L-1}$};\\
\frac{x}{t}, & \textnormal{if $(i,j)=(2^{L-1},1)$};\\
0, &\textnormal{otherwise}.\\
\end{cases} \]
Let $q(x,t), r(x,t), u(x,t)$, and $v(x,t)$ be four functions and let 
\begin{align}
b(x,t) &= (q(x,t),r(x,t),\ldots,r(x,t))^T,\nonumber\\
b'(x,t) &= (0,\ldots,0,u(x,t),v(x,t),0,\ldots,0)^T,\nonumber
\end{align} be two vectors of length $2^L-1$, where $u(x,t)$ and $v(x,t)$ are at the $2^{L-1}$th and $2^{L-1}+1$th coordinates of $b'(x,t)$, respectively.
\begin{enumerate}
\item The determinant of $A(x,t)$ is given by $-K(x,t)/t$, where \[K(x,t)=x^{L}t^{2}+\left(\frac{x-x^{2L}}{1-x}-1\right)t+x^{L}.\] In particular, one of the two roots of $K(x,t)$ is $t_1 =\phi + \sqrt{\phi^2 - 1}$, where $\phi = \left(1-\frac{x-x^{2L}}{1-x}\right)/2x^L$.

%\[t_1=\frac{1-\sum_{i=1}^{2L-1}x^{i}+\sqrt{\left(\sum_{i=1}^{2L-1}x^{i}-1\right)^2-4x^{2L}}}{2x^L}.\]
\item We have
\begin{align}
c_{1}(x,t)&=\frac{t\frac{1-x^L}{1-x}\left(\sum_{i=1}^{L-1}(q(x,t)-(t+1)r(x,t))x^{i}\right)}{K(x,t)},\nonumber\\	
c'_{1}(x,t)&=-\frac{tx^{L-1}\left(u(x,t)+tv(x,t)+(tu(x,t)+v(x,t))\frac{x-x^L}{1-x}\right)}{K(x,t)}.\nonumber	
\end{align}
\item  Let $c(x,t)=(c_1(x,t),\ldots,c_{2^L-1}(x,t))^T$ and $c'(x,t)=(c'_1(x,t),\ldots,c'_{2^L-1}(x,t))^T$ be the unique vectors satisfying $A(x,t)c(x,t)=b(x,t)$ and $A(x,t)c'(x,t)=b'(x,t)$, respectively. Then
\begin{align}
\sum_{i=1}^{2^L-1}c_i(x,t) &=-\frac{\alpha(x,t)t^2+\beta(x,t)t+\gamma(x,t)}{K(x,t)},\nonumber\\
\sum_{i=1}^{2^L-1}c'_i(x,t) &=-\frac{\delta(x,t)t^{2}+\epsilon(x,t)t-\zeta(x,t)}{K(x,t)}\nonumber,
\end{align}
where 
\begin{align}
\alpha(x,t)&=r(x,t)x\sum_{i=0}^{L-2}(2^{i+1}-1)x^{i},\nonumber\\
\beta(x,t)&=r(x,t)x^{L}\sum_{i=0}^{L-2}\left(\sum_{j=i}^{L-2}(2^{j+1}-1)\right)x^{i}+\sum_{i=0}^{L-1}\left(q(x,t)+r(x,t)\sum_{j=0}^{L-2}(2^{j+1}-1)\right)x^{L-1-i},\nonumber\\
\gamma(x,t)&=\sum_{i=1}^{L-1}\left(i(q(x,t)-r(x,t))x^{2L-1-i}+(iq(x,t)+(2^{i}-i-1)r(x,t))x^{i}\right),\nonumber\\
\delta(x,t)&=v(x,t)x^{L-1},\nonumber\\
\epsilon(x,t)&=v(x,t)\frac{1-x^{2L-1}}{1-x}+v(x,t)x^{L-1}+u(x,t)\frac{1-x^L}{1-x},\nonumber\\
\zeta(x,t)&=v(x,t)x^{L}\frac{1-x^{L-1}}{1-x}.\nonumber
\end{align}
\end{enumerate}
\end{lemma}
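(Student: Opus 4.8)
The plan is to read Lemma~\ref{lemma10} as three linked computations about the single sparse matrix $A(x,t)=I-B(x,t)$ and to exploit throughout the binary--shift (de Bruijn--type) structure that $B$ inherits from the recursion of Lemma~\ref{ll1}. The first step is to fix the bijection between the indices $1,\dots,2^L-1$ and the tuples $\{0,1\}^L\setminus\{(1,\dots,1)\}$ that reproduces the stated $B$: sending $(o_1,\dots,o_L)$ to $1+\sum_{i=1}^{L}o_i2^{i-1}$ maps $(0,\dots,0)\mapsto1$, $(1,\dots,1,0)\mapsto2^{L-1}$, $(0,\dots,0,1)\mapsto2^{L-1}+1$ and $(0,1,\dots,1)\mapsto2^L-1$, matching the special positions in $B$ and in $b'$. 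Under this dictionary a generic row $w$ carries its two entries $x$ in the columns of the ``predecessors'' $(0,w_1,\dots,w_{L-1})$ and $(1,w_1,\dots,w_{L-1})$, that is in columns $\lceil j/2\rceil$ and $\lceil j/2\rceil+2^{L-1}$; the exceptional entries $tx$ (row $1$), $x/t$ (row $2^{L-1}$) and the right--hand sides $u,v$ encode the three boundary equations at the tuples adjacent to the forbidden $(1,\dots,1)$. With this dictionary, $c$ and $c'$ are the unique solutions of $Ac=b$ and $Ac'=b'$, and every assertion of the lemma is a statement about them.

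For part~(1) I would evaluate $\det A$ by the cycle--cover (coefficient) expansion $\det(I-B)=\sum_{\mathcal L}(-1)^{c(\mathcal L)}\prod_{e\in\mathcal L}b_e$, the sum running over collections $\mathcal L$ of vertex--disjoint directed cycles in the weighted digraph with arcs $w\to(\varepsilon,w_1,\dots,w_{L-1})$ and $c(\mathcal L)$ the number of cycles. Only the two arcs $1\to(0,1,\dots,1)$ (weight $tx$) and $(1,\dots,1,0)\to1$ (weight $x/t$) carry $t$, so $\det A=P_1t+P_0+P_{-1}t^{-1}$, where $P_{\pm1}$ come from covers using exactly one special arc and $P_0$ from the $t$--free covers together with the covers using both. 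I expect $P_{\pm1}=-x^L$, each arising from a single cycle of length $L$ through the corresponding special arc (for $L=2$ a single $2$--cycle), and the remaining $x$--weights to telescope to $P_0=1-\sum_{i=1}^{2L-1}x^i=-\bigl(\tfrac{x-x^{2L}}{1-x}-1\bigr)$, giving $\det A=-K(x,t)/t$. Since $K$ has leading and constant coefficient both $x^L$, its roots have product $1$ and sum $2\phi$, hence equal $\phi\pm\sqrt{\phi^2-1}$, which identifies $t_1$.

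For parts~(2) and~(3) I would use Cramer's rule and the adjugate. Here $c_1=\det A^{(1)}/\det A$, with $A^{(1)}$ obtained from $A$ by replacing its first column with $b$; because $b=(q,r,\dots,r)^T$ is constant off its first coordinate and $A$ is sparse, cofactor expansion collapses $\det A^{(1)}$ to the stated numerator, and $c'_1$ follows likewise from $b'=(0,\dots,0,u,v,0,\dots,0)^T$. For the aggregate sums I would write $\sum_i c_i=\mathbf 1^Tc=\mathbf 1^TA^{-1}b=y^Tb$, where $y$ solves the transposed system $A^Ty=\mathbf 1$. The transpose is the forward shift, again triangular along the tree, so $y$ is computable in closed form: its entries are partial sums of powers of $x$ weighted by the branching of the binary tree, which is exactly where the factors $2^{i+1}-1$ (the number of nodes in $i+1$ levels of a binary tree) originate. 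Pairing $y$ with $b$ and with $b'$ then yields $\sum_i c_i$ and $\sum_i c'_i$, and collecting the coefficients of $t^2,t,1$ produces $\alpha,\dots,\zeta$.

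The main obstacle is the bookkeeping in part~(3): checking that these tree--weighted sums assemble into precisely $\alpha=rx\sum_{i=0}^{L-2}(2^{i+1}-1)x^i$, the nested sum in $\beta$, and the mixed expression in $\gamma$ (and similarly $\delta,\epsilon,\zeta$). I would organise this by induction on $L$: the digraph for $L+1$ contains two shifted copies of the level--$L$ structure, so $y$ and each of $\alpha,\beta,\gamma$ obey a one--step recursion as a level is appended; verifying that the claimed closed forms satisfy the same recursion---a short manipulation of geometric sums and of $\sum(2^{j+1}-1)$---then closes the argument. By comparison the cycle enumeration of part~(1) and the closed form for $y$ are routine once the dictionary of the first paragraph is fixed; the only genuinely delicate point is keeping the index shifts and the three boundary terms consistent throughout.
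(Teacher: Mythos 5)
The paper offers no proof of this lemma at all (it is explicitly ``left as an exercise''), so your attempt must stand on its own. Its skeleton is sensible and almost certainly the intended organization: the bijection $(o_1,\ldots,o_L)\mapsto 1+\sum_i o_i2^{i-1}$ is the right dictionary (it does send $(0,\ldots,0)\mapsto 1$, $(1,\ldots,1,0)\mapsto 2^{L-1}$, $(0,\ldots,0,1)\mapsto 2^{L-1}+1$, $(0,1,\ldots,1)\mapsto 2^L-1$), the reading of $B$ as the arc matrix of a binary shift-register digraph is correct, the decomposition $\det A=P_1t+P_0+P_{-1}t^{-1}$ according to which $t$-carrying arcs a cycle cover uses is valid, and so is the reduction of part (3) to $\mathbf{1}^TA^{-1}b$. (One slip in the dictionary: under your bijection, the two $x$-entries of \emph{column} $j$ sit in rows $\lceil j/2\rceil$ and $\lceil j/2\rceil+2^{L-1}$; the entries of \emph{row} $j$, i.e.\ the predecessor columns $(0,w_1,\ldots,w_{L-1})$, $(1,w_1,\ldots,w_{L-1})$, sit in columns $2j-1-w_L2^L$ and $2j-w_L2^L$, not where you place them.)

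However, the two concrete claims on which your computation rests are false, and they are exactly where the work lies. First, for $L\geq 3$ the coefficient $P_1$ does \emph{not} arise from a single cycle of length $L$: for $L=3$ the arc $(0,0,0)\to(0,1,1)$ of weight $tx$ lies on the $3$-cycle $(0,0,0)\to(0,1,1)\to(0,0,1)\to(0,0,0)$, but also on the $5$-cycle $(0,0,0)\to(0,1,1)\to(1,0,1)\to(0,1,0)\to(0,0,1)\to(0,0,0)$, and the $3$-cycle can moreover be completed by the disjoint $2$-cycle $(1,0,1)\leftrightarrow(0,1,0)$; these covers contribute $-tx^3+tx^5-tx^5$, so $P_1=-x^3$ emerges only after cancellation. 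Likewise $P_0$ is nothing like a telescoping sum; already for $L=3$ it is an alternating sum over roughly two dozen covers. So part (1) needs a genuine cancellation argument (say, a sign-reversing involution on cycle covers), not an expectation. Second, your route to part (3) breaks down at the claim that $A^T$ is ``triangular along the tree'': the digraph is not acyclic (it has the loop at $(0,\ldots,0)$, the $2$-cycle above, and many longer cycles), so no vertex order triangularizes $A$ or $A^T$, and $y=(A^T)^{-1}\mathbf{1}$ does \emph{not} have polynomial entries --- already for $L=2$ one computes $y_1=\bigl(1+x+(x+x^2)/t\bigr)/\det A(x,t)$, with the full determinant in the denominator. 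Hence the factors $2^{i+1}-1$ cannot come from a triangular solve, and the derivation of $\alpha,\ldots,\zeta$ must again pass through determinant/cofactor identities or through the induction on $L$ you allude to; but you never write down the one-step recursion that this induction would rest on, so it remains a plan rather than a proof. As it stands, the proposal establishes neither part (1) nor part (3).
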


From the previous lemma we derive the desired generating function $f(x)$. 

\begin{theorem}
Let $\phi = \left(1-\frac{x^{2L}-x}{x-1}\right)/2x^{L}$. We have 
\begin{equation}\label{eq99}
f(x)=\frac{\beta}{1-2x-x^{L}}\left(1+(k-3)x+x\frac{r_{1}(x)U_{k-1}(\phi)+r_{2}(x)U_{k}(\phi)+r_{3}(x)}{r_{4}(x)U_{k-1}(\phi)+r_{5}(x)U_{k}(\phi)+r_{3}(x)}\right),\nonumber
\end{equation} where $\beta=(1-x^{L})/(1-x)$ and
\begin{align}
r_{1}(x)&=(4\phi^{2}+2\phi-1)\beta^{2}-4(2\phi^{2}-1)\beta+4\phi^{2}-2(\phi+1)\nonumber\\
r_{2}(x)&=-(1+2\phi)\beta^{2}+4\beta\phi-2(\phi-1)\nonumber\\
r_{3}(x)&=-2(\phi-1)(\beta-1)-\beta^{2}\nonumber\\
r_{4}(x)&=(2\phi^{2}-1)\beta^{2}-2(2\phi^{2}-\phi-1)\beta+2\phi(\phi-1)\nonumber\\
r_{5}(x)&=2(\phi-1)(\beta-1)-\beta^{2}\phi.\nonumber
\end{align}
\end{theorem}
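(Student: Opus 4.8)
The plan is to read the final generating function off the linear system of Lemma \ref{ll1} by combining the two parts of Lemma \ref{lemma10}. First I would record that the system in Lemma \ref{ll1}(2) is precisely $A(x,t)\vec{F}=b(x,t)+b'(x,t)$, where $\vec{F}$ collects the $2^L-1$ functions $F_{o_1,\ldots,o_L}$ in the order fixed by the matrix $B(x,t)$. The inhomogeneous term $\frac{x^L(1-t^k)}{1-t}$ common to every equation forces $q(x,t)=r(x,t)=\frac{x^L(1-t^k)}{1-t}$, while the two remaining terms sit at coordinates $2^{L-1}$ and $2^{L-1}+1$, namely $u(x,t)=-\frac{x}{t}f_{1,\ldots,1}(x)$ (from the equation for $F_{1,\ldots,1,0}$, the row carrying the $\frac{x}{t}$ entry) and $v(x,t)=-x t^{k-1}f_{k,\ldots,k}(x)$. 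By linearity $\vec{F}=c+c'$, so Lemma \ref{lemma10}(3) furnishes $\sum_i F_i(x,t)=\sum_i c_i(x,t)+\sum_i c'_i(x,t)$ in closed form, and the quantity $F_{0,\ldots,0}(x,1)+\sum_{o_1,\ldots,o_L}F_{o_1,\ldots,o_L}(x,1)$ of Lemma \ref{ll1}(1) is exactly $\sum_i F_i(x,1)$.

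The only unknowns are the boundary series $f_{1,\ldots,1}(x)$ and $f_{k,\ldots,k}(x)$ hidden in $u,v$. The map $w\mapsto(k+1-w_1,\ldots,k+1-w_n)$ is a value-preserving involution on staircase graph words of $P_{n,L}$ that interchanges the endings $(1,\ldots,1)$ and $(k,\ldots,k)$, so $f_{1,\ldots,1}(x)=f_{k,\ldots,k}(x)=:g(x)$ and a single unknown remains. To pin it down I would apply the kernel method at the root $t_1=\phi+\sqrt{\phi^2-1}$ of $K(x,t)$ from Lemma \ref{lemma10}(1): since $F_{0,\ldots,0}=\sum_{a=1}^k f_{a,\ldots,a}(x)t^{a-1}$ is genuinely a polynomial in $t$, its closed form $c_1+c'_1$ from Lemma \ref{lemma10}(2) cannot have a pole at $t=t_1$, so its numerator must vanish there. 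With $q=r$ one has $q-(t+1)r=-tr$, and inserting $u,v$ collapses this into a single linear equation for $g$; the conjugate root $t_1^{-1}$ gives the same equation after applying the involution, consistent with $f_{1,\ldots,1}=f_{k,\ldots,k}$.

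At this stage the Chebyshev polynomials enter. The quantities $r=x^L\frac{1-t^k}{1-t}$ and $v=-xt^{k-1}g$ introduce the powers $t_1^{k}$ and $t_1^{k-1}$, and identity \eqref{i1} rewrites $t_1^{\,n}=\frac{U_n(\phi)-U_{n-2}(\phi)}{2}+U_{n-1}(\phi)\sqrt{\phi^2-1}$, turning the kernel equation into a relation among $U_{k-1}(\phi)$, $U_k(\phi)$, lower-index polynomials, and $\sqrt{\phi^2-1}$. Reducing the low-index terms through the recursion \eqref{u1} and clearing the surd (which must cancel, as $g$ is a power series in $x$), I would solve for $g(x)$ as a ratio whose numerator and denominator are linear in $U_{k-1}(\phi),U_k(\phi)$, collapsing the products that appear along the way back to this form via \eqref{i3} and \eqref{i5}. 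Substituting this $g$ into $\sum_i F_i(x,t)$ from Lemma \ref{lemma10}(3), setting $t=1$ (harmless since $K(x,1)\neq 0$), and adding the polynomial correction $\sum_{i=0}^{L-1}(k+(k-1)(2^i-2))x^i$ should yield the stated formula, with the prefactor $\beta=\frac{1-x^L}{1-x}$ and the denominator $1-2x-x^L$ emerging from $K(x,1)$ together with the geometric sums inside $\beta,\gamma,\epsilon,\zeta$.

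I expect the conceptual scaffolding — matrix form, the symmetry, one application of the kernel method, one use of \eqref{i1} — to be routine, and the genuine difficulty to lie in the final bookkeeping: carrying the $\sqrt{\phi^2-1}$ terms through the substitution, verifying their cancellation, and massaging the resulting rational function in $\phi$, $U_{k-1}(\phi)$, and $U_k(\phi)$ into the precise shape dictated by $r_1,\ldots,r_5$. This is where \eqref{i3} and \eqref{i5} do the real work and where sign and index slips are most likely, so I would organize the whole computation around explicitly tracking the coefficients of $U_{k-1}(\phi)$ and $U_k(\phi)$ at every step.
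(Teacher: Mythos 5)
Your outline coincides with the paper's own proof in every structural respect: the same identification of the system of Lemma \ref{ll1}(2) with the matrix equation of Lemma \ref{lemma10}, the same symmetry argument giving $f_{1,\ldots,1}=f_{k,\ldots,k}$, the same placement of $u$ and $v$, the kernel-method substitution $t=t_1$ to make the numerator in Lemma \ref{lemma10}(2) vanish, and the same Chebyshev conversion via \eqref{i1}, then \eqref{u1}, \eqref{i3}, \eqref{i5}. However, there is one concrete error in your setup that would derail the computation: you take $r(x,t)=q(x,t)=\frac{x^L(1-t^k)}{1-t}$. The correct value, and the one the paper's proof uses, is
\[r(x,t)=\frac{x^L(1-t^{k-1})}{1-t}.\]
The reason is that the equations indexed by $(o_1,\ldots,o_L)\in I$ arise from summing \eqref{s501} over $a\in[k-1]$ (only $k-1$ admissible values of $a$, since $a+1$ must lie in $[k]$), so their inhomogeneous term is $x^L\sum_{a=1}^{k-1}t^{a-1}$. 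The factor $\frac{x^L(1-t^{k})}{1-t}$ displayed for those equations in Lemma \ref{ll1}(2) is a typo, contradicted by the lemma's own proof; the fact that Lemma \ref{lemma10} bothers to introduce two distinct symbols $q$ and $r$ should itself have been a warning sign.

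This is not a cosmetic slip, because the kernel equation you extract from $c_1+c_1'$ hinges on the combination $q-(t+1)r$. With the correct $r$ one gets $q-(t+1)r=\frac{x^L(t^{k-1}-t)}{1-t}$, whereas your $q=r$ gives $-t\,\frac{x^L(1-t^k)}{1-t}=\frac{x^L(t^{k+1}-t)}{1-t}$; the shift in exponent changes which powers of $t_1$, hence which Chebyshev indices and coefficients, survive after applying \eqref{i1}. Consequently your linear equation for $g=f_{1,\ldots,1}$ would yield a rational expression in $U_{k-1}(\phi),U_k(\phi)$ different from the one the paper obtains, namely
\[f_{1,\ldots,1}(x)=-\frac{((t_{1}-1+\beta)t_{1}^{k}-t_{1}(t_{1}\beta-t_{1}+1))\beta}{((t_{1}-1+\beta)t_{1}^{k}+t_{1}\beta-t_{1}+1)(1-t_{1})},\]
and the final formula would not reproduce the stated $r_1,\ldots,r_5$. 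Everything else in your plan --- the vanishing-numerator argument at $t_1$, the involution, clearing the surd, and the closing bookkeeping --- is exactly the paper's route, so the fix is local: replace $r$ by $\frac{x^L(1-t^{k-1})}{1-t}$ and redo the elimination from that point on.
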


\begin{proof}
In the notation of Lemma \ref{lemma10}, let 
\begin{align}
q(x,t)&=\frac{x^{L}(1-t^{k})}{1-t},&&r(x,t)=\frac{x^{L}(1-t^{k-1})}{1-t},\nonumber\\
u(x,t)&=-\frac{xf_{1,\ldots,1}(x)}{t},&&v(x,t)=-xt^{k-1}f_{1,\ldots,1}(x).\nonumber
\end{align}
First, notice that, due to symmetry, $f_{1,\ldots,1}(x)=f_{k,\ldots,k}(x)$. By the first part of Lemma \ref{ll1}, 
\begin{align}
f(x)& = \sum_{i=0}^{L-1}(k+(k-1) (2^{i}-2))x^{i}+F_{0,\ldots,0}(x,1) +\sum_{o_1,\ldots,o_L}F_{o_1,\ldots,o_L}(x,1)\nonumber\\
&=\sum_{i=0}^{L-1}(k+(k-1) (2^{i}-2))x^{i}+\lim_{t\to 1}\sum_{i=1}^{2^L-1}(c_i(x,t)+c'_i(x,t)).\nonumber
\end{align} Application of the third part of Lemma \ref{lemma10} on the last expression gives us the formula for $f(x)$. Finally, by setting $t=t_1$ we eliminate $K(x,t)$ in the second part of Lemma \ref{lemma10} and solving for $f_{1,\ldots,1}(x)$ gives us 
\begin {align}
f(x)&=\sum_{i=0}^{L-1}(k+(k-1) (2^{i}-2))x^{i}
\nonumber\\&+ \frac{2xf_{1,\ldots,1}(x)+(2^{L}-2-(2^{L}-1)k)x^{L}+\sum_{i=1}^{L-1}(2^{i}-2-(2^{i}-1)k)x^{i+L}}{x^{L}+2x-1},\nonumber
\end{align} where
\[f_{1,\ldots,1}(x)=-\frac{((t_{1}-1+\beta)t_{1}^{k}-t_{1}(t_{1}\beta-t_{1}+1))\beta}{((t_{1}-1+\beta)t_{1}^{k}+t_{1}\beta-t_{1}+1)(1-t_{1})}. \]
Now, using \eqref{i1} and rewriting the denominator as $A+B\sqrt{\phi^2-1}$, we multiply the numerator and the denominator by $A-B\sqrt{\phi^2-1}$, to eliminate the square root in the denominator. Using \eqref{u1} and \eqref{i3} repeatedly, we obtain \[\frac{r_{1}(x)U_{k-1}(\phi)+r_{2}(x)U_{k}(\phi)+r_{3}(x)}{r_{4}(x)U_{k-1}(\phi)+r_{5}(x)U_{k}(\phi)+r_{3}(x)}.\qedhere\]
\end{proof}

\begin{example}
Taking $L=1$, the generating function \eqref{eq99} reduces to the one obtained in \cite[Theorem 2.2]{K}, which is given by \begin{equation}\label{ww1}
1+\frac{kx}{1-3x}-\frac{2x^{2}}{(1-3x)^{2}}\frac{U_{k}\left(\frac{1-x}{2x}\right)-U_{k-1}\left(\frac{1-x}{2x}\right)-1}{U_{k}\left(\frac{1-x}{2x}\right)}.
\end{equation} Indeed, in this case
$\phi= (1-x)/2x, 
\beta=1$, and 
\[f_{1,\ldots,1}(x)=\frac{U_{k-1}(\phi)+U_{k}(\phi)-1}{U_{k-1}(\phi)-\phi U_{k}(\phi)-1}.\] Thus,
\begin{align}
f(x)&=\frac{1+(k-3)x+xf_{1,\ldots,1}(x)}{1-3x}\nonumber\\&=1+\frac{kx}{1-3x}+\frac{x}{1-3x}\frac{U_{k-1}(\phi)+U_{k}(\phi)-1}{U_{k-1}(\phi)-\phi U_{k}(\phi)-1}.\label{w1}
\end{align}
Hence, to see that \eqref{ww1} and \eqref{w1} are equal, it suffices to show that \[U_{k-1}(x)^2-2xU_{k-1}(x)U_{k}(x)+U_{k}(x)^2=1.\] This identity is easily seen to hold true by using \eqref{u1} together with \eqref{i5}.
\end{example}

\begin{example}\label{e10}
Let $L=2$. In this case, $\phi = -(x^{3}+x^{2}+x-1)/2x^{2}, \beta=x+1$, and \[f_{1,\ldots,1}(x)=\frac{(2x^{4}+2x^{3}-2)U_{k-1}(\phi)+2x^{3}U_{k}(\phi)+2x}{(x^{4}+2x^{3}-1)U_{k-1}(\phi)+(x^{3}+x^{2}-x+1)U_{k}(\phi)+2x}.\]
In Table \ref{tab1} we list the generating functions for several values of $k$. 
\begin{table}[H]
\centering
{\renewcommand{\arraystretch}{1.6}
\begin{tabular}{|c|c|}\hline
$k$ & $f_{k,2}(x)$ \\
\hline\hline
 $2$ & $-\frac{1}{2x-1}$ \\ \hline
$3$ & $-\frac{x^{2}+x+1}{x^{3}+2x-1}$ \\ \hline
$4$ & $-\frac{x^{4}+3x^{3}+x^{2}-x-1}{x^{4}+x^{3}+x^{2}-3x+1}$ \\\hline
$5$ & $-\frac{2 x^{6}+5 x^{5}+5 x^{4}+5 x^{3}+x^{2}-2 x-1}{x^{6}+2 x^{5}+2 x^{4}+x^{2}-3 x+1}$ \\\hline
$6$ & $-\frac{3 x^{7}+6 x^{6}+5 x^{5}+x^{4}-8 x^{3}-5 x^{2}+2 x+1}{x^{7}+2 x^{6}+x^{5}-x^{4}-2 x^{3}-3 x^{2}+4 x-1}$ \\
\hline
\end{tabular}  
\caption{The generating functions of the number of staircase graph words of $P_{n, 2}$ over $[k]$, for $k=2,3\ldots,6$.}\label{tab1}}
\end{table}

\end{example}

%\paragraph{{\bf Acknowledgments}} The author wishes to thank Toufik Mansour for many helpful conversations. 

\end{document}